\newtheorem{assumption}[theorem]{Assumption}
\newcommand{\eps}{\varepsilon}
\newcommand{\one}{\mathbf{1}}
\newcommand{\circledOne}{\text{\ding{172}}}
\newcommand{\circledTwo}{\text{\ding{173}}}
\newcommand{\numberthis}{\addtocounter{equation}{1}\tag{\theequation}}
\DeclareMathOperator*{\argmin}{arg\,min}
\DeclareMathOperator{\prox}{prox}
\DeclareMathOperator{\col}{col}
\DeclareMathOperator{\diag}{diag}
\newcommand{\R}{\mathbb{R}}
\newcommand{\mA}{{\bf A}}
\newcommand{\mB}{{\bf B}}
\newcommand{\mE}{{\bf E}}
\newcommand{\mI}{{\bf I}}
\newcommand{\mW}{{\bf W}}
\newcommand{\cA}{{\mathcal{A}}}
\newcommand{\cE}{{\mathcal{E}}}
\newcommand{\cG}{{\mathcal{G}}}
\newcommand{\cV}{{\mathcal{V}}}
\newcommand{\ba}{{\bf a}}
\newcommand{\bb}{{\bf b}}
\newcommand{\bp}{{\bf p}}
\newcommand{\bq}{{\bf q}}
\newcommand{\br}{{\bf r}}
\newcommand{\bs}{{\bf s}}
\newcommand{\bt}{{\bf t}}
\newcommand{\bu}{{\bf u}}
\newcommand{\bx}{{\bf x}}
\newcommand{\by}{{\bf y}}
\newcommand{\bz}{{\bf z}}
\newcommand{\ds}{\displaystyle}
\newcommand{\norm}[1]{\left\| #1 \right\|}
\newcommand{\angles}[1]{\left\langle #1 \right\rangle}
\newcommand{\cbraces}[1]{\left( #1 \right)}
\newcommand{\sbraces}[1]{\left[ #1 \right]}
\def\<#1,#2>{\langle #1,#2\rangle}
\newcommand{\sigmamax}{\sigma_{\max}(\cA)}
\newcommand{\sigmamaxsqr}{\sigma_{\max}^2(\cA)}
\newcommand{\sigmaminplus}{\sigma_{\min}^+(\cA)}
\begin{document}
	
\title{On Decentralized Nonsmooth Optimization\thanks{The research is supported by the Ministry of Science and Higher Education of the Russian Federation (Goszadaniye) 075-00337-20-03, project No. 0714-2020-0005.}}

\titlerunning{Decentralized Non-smooth Optimization}

\author{Savelii Chezhegov\inst{1}\orcidID{0009-0003-7378-3210} \and
  Alexander Rogozin\inst{1}\orcidID{0000-0003-3435-2680} \and
  Alexander Gasnikov\inst{1,2,3}\orcidID{0000-0002-7386-039X}}

\authorrunning{S. Chezhegov et al.}

\institute{
	Moscow Institute of Physics and Technology, Moscow, Russia \and
	Institute for Information Transportation Problems, Moscow, Russia \and
	Caucasus Mathematic Center of Adygh State University, Moscow, Russia
}

\maketitle              

\begin{abstract}
	In decentralized optimization, several nodes connected by a network collaboratively minimize some objective function. For minimization of Lipschitz functions lower bounds are known along with optimal algorithms. We study a specific class of problems: linear models with nonsmooth loss functions. Our algorithm combines regularization and dual reformulation to get an effective optimization method with complexity better than the lower bounds.

	\keywords{convex optimization, distributed optimization}
\end{abstract}


\section{Introduction}

The focus of this work is a particular class of problems in decentralized non-smooth optimization. We assume that each of computational agents, or nodes, holds a part of a common optimization problem and the agents are connected by a network. Each node may communicate with its immediate neighbors, and the agents aim to collaboratively solve an optimization problem.

On the class of smooth (strongly) convex functions endowed with a first-order oracle, decentralized optimization can be called a theoretically well-developed area of research. For this setting, \cite{scaman2017optimal} proposed lower bounds and optimal dual algorithms. After that, optimal gradient methods with primal oracle were developed in \cite{kovalev2020optimal}. Even if the network is allowed to change, lower bounds and optimal algorithms are known and established in a series of works \cite{kovalev2021adom,kovalev2021lower,li2021accelerated}.

However, the case when local functions are non-smooth is not that well studied. Algorithm proposed in \cite{scaman2018optimal} uses a gradient approximation via Gaussian smoothing. Such a technique results in additional factor of dimension. Distributed subgradient methods \cite{nedic2009distributed} are not optimal and only converge to a neighborhood of the solution if used with a constant step-size. In other words, development of an optimal decentralized algorithm for networks is an open research question.

As noted below, we restrict our attention to a particular class of decentralized non-smooth optimization problems. Namely, we study linear models with non-smooth loss functions and an entropic regularizer. Problems of such type arise in traffic demands matrix calculation \cite{anikin2015modern,zhang2005estimating}, optimal transport \cite{li2023fast} and distributed training with decomposition over features \cite{boyd2011distributed}.

\noindent\textbf{Traffic problems}. Following the arguments in \cite{anikin2015modern}, for example, one seeks to minimize $g(x)$ subject to constraints $Ax = b$. Here function $g(x)$ may be interpreted as some similarity measure between $x$ and a supposed solution. Moving the constraint $Ax = b$ as a penalty into the objective, we obtain a problem of type
\begin{align*}
	\min_{x\in\R^d}~ g(x) + \lambda \norm{Ax - b},
\end{align*}
where $\norm{Ax - b}$ denotes some norm. If the $g$ represents a similarity measure given by KL divergence, we obtain an optimization problem for linear model with entropic regularizer.

\noindent\textbf{Optimal transport}. Another example is entropy-regularized optimal transport \cite{li2023fast}. In paper \cite{li2023fast} the authors show that an optimal transportation problem can be rewritten as
\begin{align*}
	\min_{\bx\in\Delta_n^n}\min_{\by\in\Delta_2^n}~ \angles{\bx, \ba} - \angles{\by, \bb} + \angles{\mA\bx, \by} + \lambda_\bx \angles{\bx, \log\bx} - \lambda_\by\angles{\by, \log\by},
\end{align*}
where $\Delta_n$ denotes denotes a unit simplex of dimension $n$. This illustrated that entropy-regularized linear models can arise in saddle-point optimization, as well.

\noindent\textbf{Distributed ML}. In distributed statistical inference and machine learning one may want to train a model in a distributed way \cite{boyd2011distributed}. Consider a dataset with a moderate number of training examples and a large number of features. Let the dataset be split between the nodes not by samples but by features. Let $\ell$ be the common loss function, and for each agent $i$ introduce its local dataset $(A_i, b_i)$ and the corresponding regularizer $r_i(x)$. That leads to a fitting problem
\begin{align*}
	\min_{x_1, \ldots, x_m}~ \ell\cbraces{\sum_{i=1}^m A_i x_i - b_i} + \sum_{i=1}^m r_i(x_i).
\end{align*}


\noindent\textbf{Our contribution}. In our work we propose a dual algorithm for non-smooth decentralized optimization. The dual problem is smooth although the initial one is non-smooth, but also subject to constraints. The constraints can be equivalently rewritten as a regularizer. We show that a resulting regularized problem can be solved by an accelerated proximal primal-dual gradient method. 

We study a specific class of problems, and our approach allows to break the lower bounds in \cite{scaman2018optimal}. Omitting problem parameters, the iteration and communication complexities of our algorithm are $O(\sqrt{1/\eps})$, while lower bounds suggest that at least $\Omega(1/\eps)$ communication rounds and at least $\Omega(1/\eps^2)$ local computations at each node are required.





\subsection{Notation}

Let $\otimes$ denote the Kronecker product. Let $\Delta_d = \{x\in\R^d:~ \sum_{i=1}^m x_i = 1,~ x_i\geq 0,~ i = 1, \ldots, m\}$ be a unit simplex in $\R^d$. By $\Delta_d^m$ we understand a product of $m$ simplices that is a set in $\R^{md}$. For $p \geq 0$, let $\norm{x}_p = \cbraces{\sum_{i=1}^d |x_i|^p}^{1/p}$ denote the $p$-norm in $\R^d$. By $\angles{a, b}$ we denote a scalar product of vectors. Also let $\bx = \col[x_1, \ldots, x_m] = (x_1^\top \ldots x_m^\top)^\top\in\R^{md}$ denote a column vector stacked of $x_1, \ldots, x_m\in\R^d$. Similarly for matrices $C_1, \ldots, C_m\in\R^{n\times d}$, introduce $\col[C_1, \ldots, C_m] = (C_1^\top\ldots C_m^\top)^\top\in\R^{mn\times d}$. Moreover, let $\diag[C_1, \ldots, C_m]$ denote a block matrix with blocks $C_1, \ldots, C_m$ at the diagonal. For $x\in\R^d$, let $\log x$ denote a natural logarithm function applied component-wise. We define $\one_n$ to be a vector of all ones of length $n$ and $\mI_n$ to be an identity matrix of size $n\times n$. Also denote the $i$-th coordinate vector of $\R^n$ as $e_i^{(n)}$. Let $\lambda_{\max}(C)$ and $\lambda_{\min}^+(C)$ denote the maximal and minimal nonzero eigenvalue of matrix $C$. Let $\sigma_{\max}(C)$ and $\sigma_{\min}^+(C)$ denote the maximal and minimal nonzero singular values of $C$. 

Given a convex closed set $Q$, let $\Pi_Q$ denote a projection operator on it and denote its interior $\text{int}~Q$. For a closed proper function $h(x):~Q\to\R$ and a scalar $\gamma > 0$, define proximal operator as
\begin{align*}
	\prox_{\gamma h}(x) = \argmin_{y\in Q}\cbraces{h(x) + \frac{1}{2\gamma}\norm{y - x}_2^2}.
\end{align*}

\section{Problem and assumptions}
Consider $m$ independent computational entities, or agents. Agent $i$ locally holds a dataset consisting of matrix $A_i$ and labels $b_i$. Let $A = \col[A_1, \ldots, A_m]\in\R^{n\times d}$ be the training samples and $\bb = \col[b_1, \ldots, b_m]$ be the labels. The whole training dataset $(A, \bb)$ is distributed between $m$ different machines. We consider $p$-norm minimization over unit simplex with entropy regularizer.
\begin{align}\label{eq:problem_initial}
	\min_{x\in\Delta_d} \frac{1}{m}\norm{Ax - \bb}_p + \theta\angles{x, \log x},
\end{align}
where $\theta > 0$ is a regularization parameter.

The agents can communicate information through a communication network. We assume that each machine is a node in the network that is represented by a connected undirected graph $\cG = (\cV, \cE)$. The nodes can communicate if and only if they are connected by an edge.

With graph $\cG$ we associate a communication matrix $W$ that has the following properties.
\begin{assumption}
\item
1. (Network compatibility) $[W]_{ij} = 0$ if $i\neq j$ and $(i, j)\notin\cE$.\\
2. (Positive semi-definiteness and symmetry) $W\succeq 0,~ W^\top = W$.\\  
3. (Kernel property) $Wx = 0$ if and only if $x_1 = \ldots = x_m$.\\
\end{assumption}
We also introduce the condition number of the communication matrix.
\begin{align}\label{eq:def_chi}
	\chi = \frac{\lambda_{\max}(W)}{\lambda_{\min}^+(W)}.
\end{align}

In order to get a distributed formulation, assign each agent $i$ in the network a local copy of the solution vector $x_i$. Define $\bx = \col[x_1, \ldots, x_m]$, $\mA = \diag[A_1, \ldots, A_m]$ and introduce $\by = \mA\bx$.
\begin{align}\label{eq:primal_problem_new_variables}
	\min_{\bx\in\Delta_m^d} &\norm{\by - \bb}_p + \theta\angles{\bx, \log\bx} \\
	\text{s.t. } &\mW\bx = 0 \nonumber \\
	&\by = \mA\bx \nonumber
\end{align}
The complexity of distributed methods typically depends on the condition number of the communication matrix (it is $\chi$ defined in \eqref{eq:def_chi}) and on condition numbers of objective functions. For brevity we introduce
\begin{align}\label{eq:def_sigmamax_sigmaminplus}
	\sigmamax = \max_{i=1,\ldots,m}\cbraces{\sigma_{\max}(A_i)},~ \sigmaminplus = \min_{i=1,\ldots,m} \sigma_{\min}^+(A_i).
\end{align}


\section{Dual problem}

Let us derive a dual problem to \eqref{eq:primal_problem_new_variables}. It is convenient to introduce $F(\by) = \norm{\by - \bb}_p,~ G(\bx) = \theta\angles{\bx, \log\bx}$.

\subsection{Conjugate functions}\label{2}
Let us derive the conjugate functions $F^*$ and $G^*$. Let $q \geq 1$ be such that $\frac{1}{p} + \frac{1}{q} = 1$.
\begin{align*}
    F^*(\bt) = \sup_{\by\in\R^{mn}} (\angles{\bt, \by} - F(\by)) 
    &= \sup_{\by\in\R^{mn}} (\angles{\bt, \by - \bb} - \|\by - \bb\|_p) + \angles{\bt, \bb} \\
    &= \sup_{\br\in\R^{mn}} (\angles{\bt, \br} - \|\br\|_p) + \angles{\bt, \bb} \\
    &= \begin{cases}
        \angles{\bt, \bb}, & \|\bt\|_{q} \leq 1 \\
        +\infty, & \mbox{otherwise}
        \end{cases}
\end{align*}
Last equation is a result of conjugate function for $\|x\|_p$, which is taken from a classical book by Boyd \cite{boyd2004convex}, Chapter 5.

In order to compute $G^*$, introduce $g(x) = \theta\angles{x, \log x}:~ \R^d\to\R$ and note that $G(\bx) = \sum_{i=1}^m g_i(x_i)$.
\begin{align*}
    g^*(t) = \sup_{x\in \Delta_d} (\<t, x> - \theta \<x, \log(x)>) 
\end{align*}
Writing a Lagrange function:
\begin{align*}
    L(t, x) &= \angles{t, x} - \theta\angles{x, \log(x)} + \lambda \cbraces{\one_d^\top x - 1}\\
    \nabla_x L(t, x) &= t - \theta\log x - \theta\one_d + \lambda\one_d = 0 \Rightarrow x = \exp\cbraces{\frac{t}{\theta} + \one_d\cbraces{\frac{\lambda}{\theta} - 1}}\\
    \one_d^\top x &= 1 \Rightarrow \exp\cbraces{\frac{\lambda}{\theta} - 1} \one_d^\top \exp\cbraces{\frac{t}{\theta}} = 1 \Rightarrow \exp\cbraces{\frac{\lambda}{\theta} - 1} = \frac{1}{\one_d^\top \exp\cbraces{\frac{t}{\theta}}}
\end{align*}
As a consequence
\begin{align*}
    &x = \frac{\exp\cbraces{\frac{t}{\theta}}}{\one_d^\top \exp\cbraces{\frac{t}{\theta}}}
\end{align*}
\noindent Using equation to $x$,
$$
g^*(t) = \theta \log\cbraces{\one_d^\top\exp\cbraces{\frac{t}{\theta}}}
$$
As noted above, $G(\bx)$ is separable, i.e. $G(\bx) = \sum_{i=1}^m g(x_i)$. Therefore,
\begin{align*}
	G^*(\bt)
	= \sup_{\bx\in\Delta_d^m}\cbraces{\angles{\bt, \bx} - \sum_{i=1}^m g(x_i)}
	= \sum_{i=1}^m \sup_{x\in\Delta_d} \cbraces{\angles{t_i, x} - g(x)}
	= \sum_{i=1}^m g^*(t_i).
\end{align*}
It is convenient to express $t_i$ through $\bt$. Introduce matrix 
\begin{align}
	\mE_i = \cbraces{e_i^{(m)}}^\top\otimes\mI = [0\ldots 0~\mI~0\ldots 0].
\end{align} Then $t_i = \mE_i\bt$. It holds
\begin{align*}
	G^*(\bt) = \sum_{i=1}^m g^*(\mE_i\bt).
\end{align*}

\subsection{Dual problem formulation}\label{1}

Let us derive a dual problem to \eqref{eq:primal_problem_new_variables}. It is convenient to denote $F(\by) = \norm{\by - \bb}_p,~ G(\bx) = \theta\angles{\bx, \log\bx}$. Introduce dual function
\begin{align*}
\Phi(\bz, \bs)
&= \inf_{\bx\in \Delta_d^m, \by \in \mathbb{R}^n} \sbraces{F(\by) + G(\bx) + \angles{\bz, \mW\bx} + \angles{\bs, \mA\bx-\by}}\\
&=  \inf_{\by\in\R^{mn}}\sbraces{F(\by) - \angles{\bs, \by}} + \inf_{\bx\in \Delta_d^m} \sbraces{G(\bx) + \angles{\mW\bz + \mA^\top\bs, \bx}} \\
&=  -\sup_{\by\in\R^{mn}} \sbraces{\angles{\bs, \by} - F(\by)} - \sup_{\bx\in\Delta_d^m} \sbraces{\angles{-\mW\bz - \mA^\top\bs, \bx} - G(\bx)} \\
&= -F^*(\bs) - G^*(-\mW\bz -\mA^\top\bs)
\end{align*}
As a consequence, dual problem can be formulated as
\begin{align*}
\min_{\bz\in\R^{md}, \bs\in\R^{mn}}~ F^*(\bs) + G^*(-\mW\bz -\mA^\top\bs).
\end{align*}
Results from \ref{1} and \ref{2} leads us to final dual problem formulation
\begin{align}
\label{formula: original}
    \min_{\bz, \bs:\|\bs\|_{q}\leq 1} \<\bs, \bb> + \sum\limits_{i=1}^m \theta \log\cbraces{\one_d^\top\exp\cbraces{-\frac{1}{\theta}\mE_i\cbraces{\mW\bz + \mA^\top\bs}}}
\end{align}
The constrained problem above is equivalent to a regularized problem
\begin{align}
\label{formula: regularization}
    \min_{\bz, \bs}~ \angles{\bs, \bb} + \sum\limits_{i=1}^m \theta \log\cbraces{\one_d^\top\exp\cbraces{-\frac{1}{\theta}\mE_i(\mW\bz + \mA^\top\bs)}} + \nu\|\bs\|^q_q,
\end{align}
where $\nu > 0$ is a scalar.\\
As a result, the dual problem writes as
\begin{align*}
	\min_{\bq}~ &H(\bz, \bs) + R(\bz, \bs) \numberthis\label{eq:dual_problem_with_regularizer} \\
	H(\bz, \bs) &= \angles{\bs, \bb} + \sum\limits_{i=1}^m \theta \log\cbraces{\one_d^\top\exp\cbraces{-\frac{1}{\theta}\mE_i(\mW\bz + \mA^\top\bs)}} \\
	R(\bz, \bs) &= \nu\norm{\bs}_q^q.
\end{align*}
Recall problem \eqref{eq:dual_problem_with_regularizer} and denote $\mB = (-\mW~ -\mA^\top),~ \bq = \col[\bz, \bs],~ \bp = \col[0, \bb]$. With slight abuse of notation we write $H(\bq) = H(\bz, \bs)$ and $R(\bq) = R(\bz, \bs)$.
Problem \eqref{eq:dual_problem_with_regularizer} takes the form
\begin{align*}
	\min_{\bq}~ H(\bq) + R(\bq).
\end{align*}
Here $H$ is a differentiable function and $R$ is a regularizer, or composite term. Problems of such type are typically solved by proximal optimization methods.

\section{Algorithms and Complexities}

\subsection{Similar Triangles Method}

We apply an accelerated primal-dual algorithm called Similar Triangles Method (STM) \cite{dvurechensky2017adaptive}.

\begin{algorithm}[H]
	\caption{Similar triangles method(STM)}
	\label{alg:STM}
	\begin{algorithmic}[1]
		\REQUIRE{$A_0 = \alpha_0 = 0,~ \bq^0 = \bu^0 = \by^0$.}
		\FOR{$k = 0, \ldots, N - 1$}
		\STATE{Find $\alpha_{k+1}$ from equality $(A_{k} + \alpha_k)(1 + A_k\mu) = L\alpha_{k+1}$ and put $A_{k+1} = A_k + \alpha_{k+1}$.}
        \STATE{Introduce $$\phi_{k+1}(\bx) = \alpha_{k+1}\cbraces{\angles{\nabla H(\by^{k+1}), \bx} + R(\bx)} + \frac{1 + A_k\mu}{2}\norm{\bx - \bu^k}_2^2 + \frac{\mu\alpha_{k+1}}{2}\norm{\bx - \by^{k+1}}_2^2$$}
		\STATE{$\by^{k+1} = \frac{\alpha_{k+1}\bu^k + A_k\bq^k}{A_{k+1}}$}
		\STATE{$\ds\bu^{k+1} = \argmin_{\bq} \sbraces{\phi_{k+1}(\bx)}$\label{alg:STM:update}}
		\STATE{$\bq^{k+1} = \frac{\alpha^{k+1}\bu^{k+1} + A_k\bq^k}{A_{k+1}}$}
		\ENDFOR
	\end{algorithmic}
\end{algorithm}
First, note that line~\ref{alg:STM:update} of Algorithm~\ref{alg:STM} can be decomposed into a gradient step and computation of proximal operator of $R$.
\begin{align*}
	 \bu^{k+1} = \argmin_{\bq}\sbraces{\phi_{k+1}(\bx)} = \prox_{\gamma_k R} \sbraces{\mu\gamma_k\by^{k+1} + (1 - \mu\gamma_k)\bu^k - \gamma_k\nabla F(\by^{k+1})},
\end{align*}
where $\gamma_k = \frac{\alpha_{k+1}}{1 + \mu A_{k+1}}$. Let us show that this operator can be easily computed. Let $\bt = \col[\bt_\bz, \bt_\bs]$. By definition of proximal operator we have
\begin{align*}
	\prox_{\gamma_k R}(\bt) 
	&= \argmin_{\bs}\cbraces{\frac{1}{2\gamma_k}\norm{\bt - \bs}_2^2 + R(\bq)} \\
	&= \argmin_{\bz, \bs}\cbraces{\frac{1}{2\gamma_k}(\norm{\bt_\bs - \bs}_2^2 + \norm{\bt_\bz - \bz}_2^2) + \nu\norm{\bs}_q^q}.
\end{align*}
Let $\tilde\bq = \col[\tilde\bz, \tilde\bs] = \prox_{\gamma_k R}(\bt)$. We have $\tilde\bz = \bt_\bz$. Let $\tilde s_i$ denote the $i$-th component of $\tilde\bs$ and $t_i$ denote the $i$-th component of $\bt_\bs$; then $\tilde s_i$ can be found from equation
\begin{align*}
	t_i - \tilde s_i + \gamma_k q\nu |\tilde s_i|^{q - 1} = 0.
\end{align*}
We assume that the equation above can be efficiently numerically solved w.r.t. $\tilde s_i$. For example, it can be done by solution localization methods such as binary search. As a result, we see that the proximal operator of $R$ can be computed cheaply.


\noindent Let us formulate the theorem on convergence of Algorithm~\ref{alg:STM} for the problem \eqref{eq:dual_problem_with_regularizer}.
\begin{theorem}
\label{thm: STM-complexity}
    Algorithm \ref{alg:STM} requires 
    $$
    O\cbraces{\cbraces{\frac{\theta m\cbraces{\sigmamaxsqr + \sigma_{\max}^2(W)}\|\log x^* + \one_d\|^2_2}{\min((\sigma^+_{min}(\cA))^2, (\lambda^+_{min}(W))^2)\eps}}^{1/2}}
    $$
    iterations to reach $\eps$-accuracy for the problem \eqref{eq:dual_problem_with_regularizer}
\end{theorem}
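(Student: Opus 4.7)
The approach I would take is to invoke the standard accelerated convergence rate of STM \cite{dvurechensky2017adaptive}: for an $L$-smooth objective $H$ with composite term $R$, STM applied with $\mu = 0$ reaches accuracy $\eps$ on $H+R$ after $N = O\cbraces{\sqrt{L\|\bq^0-\bq^*\|_2^2/\eps}}$ iterations. The theorem then reduces to two quantitative estimates: the smoothness constant $L$ of $H$ and the radius $\|\bq^0-\bq^*\|_2$. I would initialize $\bq^0 = 0$.

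\textbf{Step 1: smoothness of $H$.} Writing $H$ as a sum of log-sum-exp terms composed with the linear map $\bq\mapsto\mB\bq = -\mW\bz - \mA^\top\bs$, a direct computation gives $\nabla^2 f(v) = (1/\theta)(\diag(p) - pp^\top)$ for $f(v) = \theta\log(\one_d^\top\exp(v/\theta))$, so $\|\nabla^2 f(v)\|\leq 1/\theta$. By the chain rule and block-diagonality of $\sum_i f(\mE_i \mB\bq)$ in the variable $\mB\bq$,
\begin{align*}
\|\nabla^2 H(\bq)\|\leq \frac{1}{\theta}\|\mB\|^2 = \frac{1}{\theta}\|\mW^2 + \mA^\top\mA\| \leq \frac{\sigma_{\max}^2(W) + \sigmamaxsqr}{\theta},
\end{align*}
hence $L \leq (\sigma_{\max}^2(W) + \sigmamaxsqr)/\theta$.

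\textbf{Step 2: radius bound.} I would use KKT for \eqref{eq:primal_problem_new_variables}. The consensus constraint $\mW\bx^* = 0$ forces $\bx^* = \one_m \otimes x^*$ with $x^*$ the centralized minimizer of \eqref{eq:problem_initial}; therefore $\|\log\bx^* + \one_{md}\|_2^2 = m\|\log x^* + \one_d\|_2^2$. At an interior optimum, stationarity in $\bx$ reads
\begin{align*}
\theta(\log\bx^* + \one_{md}) + \mW\bz^* + \mA^\top\bs^* + \Lambda^* = 0,
\end{align*}
where $\Lambda^* = \col[\lambda_1^*\one_d,\ldots,\lambda_m^*\one_d]$ encodes the per-agent simplex multipliers. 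Taking the minimum-norm dual selection $\bq^* \in \ker(\mB)^\perp$ and eliminating $\Lambda^*$ by projecting onto the appropriate subspace yields $\|\mB\bq^*\|_2^2 \lesssim \theta^2 m\|\log x^* + \one_d\|_2^2$. Inverting on $\ker(\mB)^\perp$ and using $\mB\mB^\top = \mW^2 + \mA^\top\mA$ to establish
\begin{align*}
(\sigma_{\min}^+(\mB))^2 \geq \min\cbraces{\sigmaminplussqr,\,(\lambda_{\min}^+(W))^2}
\end{align*}
gives
\begin{align*}
\|\bq^*\|_2^2 \leq \frac{\theta^2 m\|\log x^* + \one_d\|_2^2}{\min\cbraces{\sigmaminplussqr,\,(\lambda_{\min}^+(W))^2}}.
\end{align*}

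\textbf{Combining.} Plugging both estimates into $N = O\cbraces{\sqrt{L\|\bq^0-\bq^*\|_2^2/\eps}}$ reproduces the announced bound. The main obstacle is Step~2: cleanly eliminating the simplex multiplier $\Lambda^*$ from the KKT identity and establishing the quantitative lower bound on $\sigma_{\min}^+(\mB)$ is delicate, since in general $\lambda_{\min}^+(P_1+P_2)$ need not dominate $\min(\lambda_{\min}^+(P_1),\lambda_{\min}^+(P_2))$; one must exploit the block structure of $\mB$ and the consensus geometry to make the estimate go through. The smoothness computation and the invocation of the STM rate are essentially mechanical in comparison.
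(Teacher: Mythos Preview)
Your approach is essentially the paper's: bound the Lipschitz constant of $H$, bound the dual radius $\|\bq^*\|_2$, and plug into the standard STM rate $O(\sqrt{L\|\bq^0-\bq^*\|_2^2/\eps})$. The paper proves exactly these two lemmas and combines them the same way.

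One difference worth noting: your Hessian argument gives the sharper constant $L \le (\sigmamaxsqr + \sigma_{\max}^2(W))/\theta$, while the paper's looser chain of inequalities picks up an extra factor $m$, yielding $L_H = m(\sigmamaxsqr + \sigma_{\max}^2(W))/\theta$. The paper then compensates by observing that accuracy $\eps$ in the $1/m$-normalized primal \eqref{eq:problem_initial} corresponds to accuracy $m\eps$ in the unnormalized dual, so it divides by $m\eps$ in the STM rate. You omit this rescaling. The two discrepancies cancel exactly, and both arrive at the stated bound. For the radius, the paper obtains $\mB\bq^* = \nabla G(\bx^*)$ directly from the saddle-point inequality of the Lagrangian (without writing out the simplex multipliers explicitly) and then uses $\lambda_{\min}^+(\mB^\top\mB)\|\bq^*\|_2^2 \le \|\nabla G(\bx^*)\|_2^2$; this is precisely your Step~2, and your flagged obstacle about $\sigma_{\min}^+(\mB)$ is handled in the paper via the block identity $\mB\mB^\top = \mW^2 + \mA^\top\mA$ together with the Kronecker/block-diagonal structure of $\mW$ and $\mA$.
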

Before we prove the above result, we need to formulate some lemmas.\\
We need to find Lipschitz constant for dual problem. Namely, let us find the Lipschitz constant for function $G^*(-\mW\bz - \mA^\top\bs)$ as a function of $\bq = \col[\bz, \bs]$.
\begin{lemma}
\label{lem: smoothness}
	Function $H(\bq)$ has a Lipschitz gradient with constant
	\begin{align*}
		L_H = \frac{m\cbraces{\sigmamaxsqr + \sigma_{\max}^2(W)}}{\theta}
	\end{align*}
\end{lemma}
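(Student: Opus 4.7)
The plan is to decompose $H$ into an affine piece plus a sum of $m$ compositions of the log-sum-exp function with linear maps; the affine term $\langle\bs,\bb\rangle$ has zero Hessian, so it suffices to bound the Lipschitz constants of the gradients of the remaining $m$ summands and add. Concretely, I would write $H(\bq) = \langle\bs,\bb\rangle + \sum_{i=1}^m H_i(\bq)$ with $H_i(\bq) = g^*\bigl(-\mE_i(\mW\bz + \mA^\top\bs)\bigr)$, where $g^*(t) = \theta\log(\one_d^\top\exp(t/\theta))$ is the conjugate derived in Section~\ref{2}.

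Next I would establish that $\nabla g^*$ is $(1/\theta)$-Lipschitz in the Euclidean norm. The most transparent route is a direct Hessian computation: $\nabla^2 g^*(t) = \tfrac{1}{\theta}(\diag(p) - pp^\top)$ where $p = \exp(t/\theta)/(\one_d^\top\exp(t/\theta))$, and this matrix is positive semidefinite with spectral norm at most $\max_i p_i/\theta \le 1/\theta$. Equivalently one can invoke Fenchel duality together with the $\theta$-strong convexity of $g(x) = \theta\langle x,\log x\rangle$ in $\ell_2$ on $\Delta_d$ (its Hessian $\theta\diag(1/x_i)$ dominates $\theta\mI_d$ since $x_i\le 1$).

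Then I would bound the operator norm of the linear map $L_i:\bq\mapsto -\mE_i(\mW\bz+\mA^\top\bs)$, written as the row-block matrix $[-\mE_i\mW~~-\mE_i\mA^\top]$. The identity $\|[C~~D]\|^2 = \|CC^\top + DD^\top\| \le \|C\|^2 + \|D\|^2$ reduces matters to bounding each block. Since $\mE_i = (e_i^{(m)})^\top\otimes\mI_d$ has unit operator norm, $\|\mE_i\mW\|\le\sigma_{\max}(W)$. And because $\mA$ is block-diagonal, $\mE_i\mA^\top$ acts by extracting $A_i^\top\bs_i$ from $\bs$, so $\|\mE_i\mA^\top\| = \sigma_{\max}(A_i) \le \sigma_{\max}(\cA)$. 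This gives $\|L_i\|^2 \le \sigma_{\max}^2(\cA) + \sigma_{\max}^2(W)$.

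Finally, the chain rule for smooth functions (if $f$ is $L$-smooth and $T$ is linear, $f\circ T$ is $L\|T\|^2$-smooth) yields $L_{H_i}\le (\sigma_{\max}^2(\cA)+\sigma_{\max}^2(W))/\theta$, and summing over $i=1,\ldots,m$ delivers the claimed $L_H = m(\sigma_{\max}^2(\cA)+\sigma_{\max}^2(W))/\theta$. No step presents a genuine obstacle; the only mildly delicate point is the choice of norm in the $g\leftrightarrow g^*$ duality, which the direct Hessian calculation in the second step sidesteps entirely.
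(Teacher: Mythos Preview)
Your proof is correct and follows essentially the same approach as the paper: decompose $H$ as an affine term plus $\sum_i g^*\circ L_i$, establish the $1/\theta$-smoothness of $g^*$, bound the operator norm of each linear piece by $\sigma_{\max}^2(\cA)+\sigma_{\max}^2(W)$, and sum. The only cosmetic differences are that the paper verifies the smoothness of $g^*$ via the dual route (computing $\nabla^2 g = \theta\,\diag(1/x_i)\succeq\theta\mI$ on $\Delta_d$ and invoking the Fenchel strong-convexity/smoothness duality) rather than your direct Hessian of log-sum-exp, and it organizes the operator-norm bookkeeping through the global matrix $\mB=(-\mW\ -\mA^\top)$ and $\sigma_{\max}(\mE_i\mB)\le\sigma_{\max}(\mB)$ rather than bounding each block of $L_i$ separately.
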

\begin{proof}
According to \cite{kakade2009duality}, if a function is $\mu$-strongly convex in norm $\norm{\cdot}_2$, then its conjugate function $h^*(y)$ has a $\frac{1}{\mu}$-Lipschitz gradient in $\norm{\cdot}_2$.\\
Using the fact from \cite{boyd2004convex}, Chapter 3, we obtain that the conjugate function of
$$h(x) = \log\left(\sum\limits_{i=1}^d \exp(x_i)\right)$$
is $$h^*(y) = 
\begin{cases}
    \angles{y, \log{y}}, & y \in \Delta_d\\
    \infty, & \text{otherwise}
\end{cases}$$
To have a constant of strongly convexity, we can find a minimal eigenvalue of Hessian of $h^*(y)$
$$\nabla^2 h^*(y) = \text{diag}\cbraces{\frac{1}{y_1}, \ldots, \frac{1}{y_d}}.$$
For any $y\in\text{int}\Delta_d$, we have that $1/y_i\geq 1,~ i = 1, \ldots, d$. Therefore, we have $\lambda_{\min}(\nabla^2 h^*(y))\geq 1$, i.e. $\mu_{h^*} \geq 1$.\\
As a consequence, for function $$h(x) = \log\left(\sum\limits_{i=1}^d \exp(x_i)\right)$$
Lipschitz constant is equal to $L_h = 1$.\\
Therefore, for a function $$g^*(x) = \theta h\Big(\frac{x}{\theta}\Big)$$
Lipschitz constant is equal to $L_g = 1/\theta$.\\
Introduce $\mB = \Big(-\mW, -\mA^\top\Big)$. We have
\begin{align*}
H(\bq) = G^*(\mB\bq) + \angles{\bs, \bb} = \sum_{i=1}^m g^*(\mE_i\mB\bq) + \angles{\bs, \bb}.
\end{align*}
It holds
\begin{align*}
\|\nabla &H(\bq_2) - \nabla H(\bq_2)\|_2 = \|\mB^\top\nabla G^*(\mB\bq_2) - \mB^\top\nabla G^*(\mB\bq_2)\|_2 \\
&\leq \sigma_{\max}(\mB)\norm{\nabla G^*(\mB\bq_2) - \nabla G^*(\mB\bq_1)}
\leq \sigma_{\max}(\mB)\sum_{i=1}^m \norm{\nabla g^*(\mE_i\mB\bq_2) - \nabla g^*(\mE_i\mB\bq_1)}_2 \\
&\leq \sigma_{\max}(\mB)\sum_{i=1}^m \frac{\sigma_{\max}(\mE_i\mB)}{\theta}\norm{\bq_2 - \bq_1}_2
\leq \frac{m\sigma_{\max}^2(\mB)}{\theta} \norm{\bq_2 - \bq_1}_2 \\
&= \frac{m(\sigma_{\max}^2(\mA) + \sigma_{\max}^2(\mW))}{\theta}\norm{\bq_2 - \bq_1}_2 \\
&\leq \frac{m\cbraces{\underset{i=1,\ldots,m}{\max}(\sigma_{\max}^2(A_1), \ldots, \sigma_{\max}^2(A_m)) + \sigma_{\max}^2(W)}}{\theta}\norm{\bq_2 - \bq_1}_2 \\
&= L_H\norm{\bq_2 - \bq_1}_2,
\end{align*}
which finishes the proof of lemma.
\end{proof}
For writing a complexity of solver for our problem, we also need to bound the dual distance.
\begin{lemma}
    \label{lem:solution norm}
	Let $\bq^* = \col[\bz^*, \bs^*]$ be the solution of dual problem~\eqref{eq:dual_problem_with_regularizer} and let $x^*$ be a solution of~\eqref{eq:problem_initial}. It holds 
	\begin{align*}
		\norm{\bq^*}_2^2\leq R_{dual}^2 = \frac{\theta^2 m\|\log x^* + \one_d\|^2_2}{\min((\sigma^+_{min}(\cA))^2, (\lambda^+_{min}(W))^2)}.
	\end{align*}
\end{lemma}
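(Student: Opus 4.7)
The plan is to use the KKT conditions of problem~\eqref{eq:primal_problem_new_variables} to relate the dual optimum $\bq^*$ to $\nabla G(\bx^*)$, and then to invert the operators $\mW$ and $\mA^\top$ on the subspaces where they are injective, picking up the factors $\lambda_{\min}^+(W)$ and $\sigma_{\min}^+(\cA)$ in the denominator.

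First, strong duality produces a primal--dual pair $(\bx^*, \by^*, \bz^*, \bs^*)$ for~\eqref{eq:primal_problem_new_variables}. The consensus constraint $\mW\bx^* = 0$ forces $\bx^* = \one_m \otimes x^*$, so
$$\nabla G(\bx^*) \;=\; \theta\, \one_m \otimes (\log x^* + \one_d), \qquad \|\nabla G(\bx^*)\|_2^2 \;=\; \theta^2 m\,\|\log x^* + \one_d\|_2^2,$$
which already accounts for the numerator of the target bound. Lagrangian stationarity in $\bx$ then reads
$$\mW\bz^* + \mA^\top\bs^* \;=\; -\nabla G(\bx^*) - \lambda^*,$$
where $\lambda^* = \col[\lambda_i^*\,\one_d]_{i=1}^m$ is the multiplier for the simplex constraint $\one_d^\top x_i = 1$. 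Among dual optima we pick the minimum-norm representative, equivalently $\bz^* \perp \kernel(\mW)$ and $\bs^* \perp \kernel(\mA^\top)$; this changes neither $\mW\bz^*$ nor $\mA^\top\bs^*$. Using $\sigma_{\min}^+(\mW) = \lambda_{\min}^+(W)$ for $\mW = W \otimes \mI_d$, we get
$$\|\mW\bz^*\|_2 \ge \lambda_{\min}^+(W)\,\|\bz^*\|_2, \qquad \|\mA^\top\bs^*\|_2 \ge \sigma_{\min}^+(\cA)\,\|\bs^*\|_2.$$

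The main obstacle is to separate the contributions of $\mW\bz^*$ and $\mA^\top\bs^*$ in the presence of the nuisance multiplier $\lambda^*$. The plan is to apply the block projector $\cP = \mI_m \otimes (\mI_d - \tfrac{1}{d}\one_d\one_d^\top)$, which annihilates $\lambda^*$, commutes with $\mW$, and is a contraction, producing $\mW\cP\bz^* + \cP\mA^\top\bs^* = -\cP\nabla G(\bx^*)$ with norm of the right-hand side at most $\theta\sqrt{m}\,\|\log x^* + \one_d\|_2$. Then decompose $\R^{md}$ orthogonally into the consensus subspace (which equals $\kernel(\mW)$ and contains $\nabla G(\bx^*)$) and its complement (which contains the range of $\mW$): projecting the identity onto the consensus part pins $\cP\mA^\top\bs^*$ to the right-hand side and yields $\|\bs^*\|_2 \le \theta\sqrt{m}\,\|\log x^* + \one_d\|_2/\sigma_{\min}^+(\cA)$, while projecting onto the complement expresses $\mW\bz^*$ in terms of the non-consensus part of $\cP\mA^\top\bs^*$ and yields $\|\bz^*\|_2 \le \theta\sqrt{m}\,\|\log x^* + \one_d\|_2/\lambda_{\min}^+(W)$. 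Adding the squares of these two bounds gives the claimed $\|\bq^*\|_2^2 \le R_{dual}^2$.
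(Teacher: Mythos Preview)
Your plan has a genuine gap in the final decomposition step. After applying $\cP$ you have
\[
\mW(\cP\bz^*) + \cP\mA^\top\bs^* \;=\; -\,\cP\nabla G(\bx^*),
\]
and projecting onto the consensus subspace $\ker(\mW)$ indeed kills the first term. But what remains is only $\Pi_{\mathrm{cons}}\bigl(\cP\mA^\top\bs^*\bigr) = -\cP\nabla G(\bx^*)$: you have controlled one orthogonal slice of $\cP\mA^\top\bs^*$, not the whole vector. There is no reason for $\cP\mA^\top\bs^* = \col[P_d A_i^\top s_i^*]$ to lie in the consensus subspace, so its non-consensus component is untouched by this projection, and the inequality $\|\bs^*\|_2 \le \theta\sqrt m\,\|\log x^*+\one_d\|_2/\sigma_{\min}^+(\cA)$ does not follow. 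The same issue hits the $\bz^*$ bound: the non-consensus projection gives $\mW\cP\bz^* = -\Pi_{\mathrm{cons}^\perp}\cP\mA^\top\bs^*$, whose right-hand side you have not bounded; moreover $\cP\bz^*\neq\bz^*$ in general (the projectors $\cP=\mI_m\otimes(\mI_d-\tfrac1d\one_d\one_d^\top)$ and $\Pi_{\ker(\mW)^\perp}$ are different), so even a bound on $\|\mW\cP\bz^*\|$ would not directly control $\|\bz^*\|$. Finally, even if both block bounds were valid, summing their squares gives $2R_{dual}^2$, not $R_{dual}^2$.

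The paper bypasses all of this by not separating the blocks. It writes $\mB\bq^* = \nabla G(\bx^*)$ with $\mB = (-\mW\ \ -\mA^\top)$, and then (for the minimum-norm dual solution, so that $\bq^*\perp\ker\mB$) uses
\[
\|\bq^*\|_2^2 \;\le\; \frac{\|\mB\bq^*\|_2^2}{\lambda_{\min}^+(\mB^\top\mB)} \;=\; \frac{\|\nabla G(\bx^*)\|_2^2}{\lambda_{\min}^+(\mW^2+\mA^\top\mA)}.
\]
The denominator is identified with $\min\bigl((\lambda_{\min}^+(W))^2,(\sigma_{\min}^+(\cA))^2\bigr)$, and $\|\nabla G(\bx^*)\|_2^2=\theta^2 m\|\log x^*+\one_d\|_2^2$ gives the numerator. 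No projector $\cP$, no consensus split, and no factor~$2$.
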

\begin{proof} Let $(\bx^*, \by^*)$ be a solution to primal problem~\eqref{eq:primal_problem_new_variables}. In particular, we have $\bx^* = \one_m\otimes x^*$. Then $(\bx^*, \by^*, \bz^*, \bs^*)$ is a saddle point of Lagrange function. For any $\bx\in\Delta_m^d,~ \by\in\R^{md},~ \bz\in\R^{md}, \bs\in\R^{mn}$ it holds
\begin{align*}
    F(\by^*) + &G(\bx^*) + \<\bz, \mW\bx^*> + \<\bs, \mA\bx^* - \by^*> \\
    &\leq F(\by^*) + G(\bx^*) + \<\bz^*, \mW\bx^*> + \<\bs^*, \mA\bx^* - \by^*> \\
    &\leq F(\by) + G(\bx) + \<\bz^*, \mW\bx> + \<\bs^*, \mA\bx - \by>
\end{align*}
Substituting $\by = \by^*$ we obtain
\begin{align*}
    &G(\bx) \geq G(\bx^*) + \<-\mW\bz^* - \mA^\top\bs^*, \bx - \bx^*>\\
    &-\mW\bz^* - \mA^\top\bs^* = \nabla G(\bx^*)
\end{align*}
Recalling that $\mB = (-\mW, -\mA^\top)$ we derive
\begin{align*}
    \mB\bq^* &= \nabla G(\bx^*)\\
    \<\mB^\top\mB\bq^*, \bq^*> &= \|\nabla G(\bx^*)\|^2_2\\
    \lambda^+_{min}(\mB^\top\mB)\|\bq^*\|^2_2 &\leq \|\nabla G(\bx^*)\|^2_2\\
    \|\bq^*\|^2_2 &\leq \frac{\|\nabla G(\bx^*)\|^2_2}{\lambda^+_{min}(\mB^\top\mB)} 
\end{align*}
We have
\begin{align*}
\lambda_{\min}^+(\mB^\top \mB)
&= \lambda_{\min}^+(\mB\mB^\top)
= \lambda_{\min}^+ (\mW^2 + \mA^\top \mA)
= \lambda_{\min}^+(\mW^2\otimes\mI_d + \mI_m\otimes \mA^\top \mA) \\
&= \min((\lambda_{\min}^+(\mW))^2, (\sigma_{\min}^+(\mA))^2)
= \min\cbraces{(\lambda_{\min}^+(W))^2, (\sigma_{\min}^+(\cA))^2}
\end{align*}
and
\begin{align*}
	\norm{\nabla G(\bx^*)}_2^2 = \theta^2 \norm{\log\bx^* + \one_{md}}_2^2 = \theta^2m\norm{\log x^* + \one_d}_2^2.
\end{align*}
As a result, we obtain
\begin{align*}
	R^2_{dual} = \frac{\theta^2 m\|\log x^* + \one_d\|^2_2}{\min((\sigma^+_{min}(\cA))^2, (\lambda^+_{min}(W))^2)}.
\end{align*}
\end{proof}
Now we prove the theorem about complexity of Similar Triangles Method.\\
\\
\textbf{Proof of Theorem \ref{thm: STM-complexity}}
\begin{proof} 
    First, note that solution accuracy $\eps$ for problem~\eqref{eq:problem_initial} is equivalent to accuracy $m\eps$ for problem~\eqref{eq:dual_problem_with_regularizer}. STM requires $O((L_H R_{dual}^2/(m\eps))^{1/2})$ iterations to reach $\eps$-accuracy. Combining the results from lemmas \ref{lem: smoothness} and \ref{lem:solution norm} we obtain the final complexity.
\end{proof}

\subsection{Accelerated block-coordinate method}
In previous section, our approach was based on a way where we apply a first-order method without separating the variables. But we can treat variable blocks $\bz \text{ and } \bs$ separately and get a better convergence bound. We apply an accelerated method ACRCD (Accelerated by Coupling Randomized Coordinate Descent) from \cite{gasnikov2016nontriviality}. We describe the result only for the case $p = 1$. In this case, we apply ACRCD not to regularized dual problem~\eqref{eq:dual_problem_with_regularizer}, but to constrained version of dual problem \eqref{formula: original}. We also note that ACRCD is primal-dual, so solving the dual problem with accuracy $\eps$ is sufficient to restore the solution of the primal with accuracy $\eps$.
\begin{algorithm}[H]
\caption{ACRCD}
\label{alg:ACRCD}
\begin{algorithmic}[1]
	\REQUIRE{Define coefficients $\alpha_{k+1} = \frac{k+2}{8},~ \tau_k = \frac{2}{k+2}$. Choose stepsizes $L_\bz,~ L_\bs$. Put $\overline{\bz}^0 = \underline{\bz}^0 = \bz^0,~ \overline{\bs}^0 = \underline{\bs}^0 = \bs^0$.}
	\FOR{$k = 0, 1, \ldots, N - 1$}
	\STATE{$\bz^{k+1} = \tau_k \underline{\bz}^k + (1 - \tau_k)\overline\bz^k$}
	\STATE{$\bs^{k+1} = \tau_k \underline{\bs}^k + (1 - \tau_k)\overline\bs^k$}
	\STATE{Put $\xi_i = 1$ with probability $\eta$ and $\xi = 0$ with probability $(1-\eta)$, where $\eta = \frac{\lambda_{\max}(W)}{\lambda_{\max}(W) + \sigmamax}$}
	\vspace{0.2cm}
	\IF{$\xi_i = 1$}
	\STATE{$\overline{\bz}^{k+1} = \bz^{k+1} - \frac{1}{L_\bz}\nabla H_\bz(\bz^{k+1}, \bs^{k+1})$}
	\STATE{$\underline{\bz}^{k+1} = \underline{\bz}^k - \frac{2\alpha_{k+1}}{L_\bz}\nabla H_\bz(\bz^{k+1}, \bs^{k+1})$}
	\ELSE
	\STATE{$\overline{\bs}^{k+1} = \Pi_{[-1, 1]^{mn}}\sbraces{\bs^{k+1} - \frac{1}{L_\bs}\nabla H_\bs(\bz^{k+1}, \bs^{k+1})}$}
	\STATE{$\underline{\bs}^{k+1} = \Pi_{[-1, 1]^{mn}}\sbraces{\underline{\bs}^k - \frac{2\alpha_{k+1}}{L_\bs}\nabla H_\bs(\bz^{k+1}, \bs^{k+1})}$}
	\ENDIF
	\ENDFOR
\end{algorithmic}
\end{algorithm}
\begin{theorem}\label{th:acrcd_convergence}
	To reach accuracy $\eps$ with probability at least $(1 - \delta)$, Algorithm~\ref{alg:ACRCD} requires $N_{comm}$ communication rounds and $N_{comp}$ local computations, where
	\begin{align*}
		N_{comm} &= \frac{m^{1/4}}{\sqrt{\theta\eps}} \frac{\lambda_{\max}(W)}{\lambda_{\min}^+(W)}\cbraces{2\theta^2 \norm{\log\bx^* + \one_d}_2^2 + 2n\sigmamaxsqr + n(\lambda_{\min}^+(W))^2}^{1/2}\log\cbraces{\frac{1}{\delta}}, \\
		N_{comp} &= \frac{m^{1/4}}{\sqrt{\theta\eps}} \frac{\sigmamax}{\lambda_{\min}^+(W)}\cbraces{2\theta^2 \norm{\log\bx^* + \one_d}_2^2 + 2n\sigmamaxsqr + n(\lambda_{\min}^+(W))^2}^{1/2}\log\cbraces{\frac{1}{\delta}}.
	\end{align*}
\end{theorem}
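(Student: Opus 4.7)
The plan is to reuse the three ingredients used for Theorem~\ref{thm: STM-complexity}, adapted to the block-coordinate setting: (i) block-wise Lipschitz constants for the smooth part $H$ of~\eqref{formula: original}, (ii) a diameter bound on the dual optimum that exploits the box constraint $\|\bs\|_\infty \leq 1$, and (iii) the convergence rate of ACRCD from \cite{gasnikov2016nontriviality}, finally split into communication and local-computation counts.

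First I would compute per-block Lipschitz constants of $H$. Starting from $\nabla_\bz H(\bq) = -\mW^\top\nabla G^*(\mB\bq)$ and $\nabla_\bs H(\bq) = \bb - \mA\nabla G^*(\mB\bq)$, together with the fact (established inside the proof of Lemma~\ref{lem: smoothness}) that $\nabla g^*$ is $(1/\theta)$-Lipschitz, the same chain of inequalities as in Lemma~\ref{lem: smoothness} but restricted to one block yields $L_\bz$ of order $\lambda_{\max}^2(W)/\theta$ and $L_\bs$ of order $\sigmamaxsqr/\theta$. Crucially, $L_\bz/L_\bs = \lambda_{\max}^2(W)/\sigmamaxsqr$, so the optimal block-sampling probability $\sqrt{L_\bz}/(\sqrt{L_\bz}+\sqrt{L_\bs})$ reduces to $\lambda_{\max}(W)/(\lambda_{\max}(W)+\sigmamax)$, which is exactly the $\eta$ hard-coded in Algorithm~\ref{alg:ACRCD}.

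Next, I would adapt Lemma~\ref{lem:solution norm} to the constrained dual. The feasibility set forces $\|\bs^*\|_\infty \leq 1$, hence $\|\bs^*\|_2^2 \leq mn$. Combined with the KKT relation $\mW\bz^* + \mA^\top\bs^* = -\nabla G(\bx^*)$ (derived exactly as in Lemma~\ref{lem:solution norm}) and $\|\nabla G(\bx^*)\|_2^2 = m\theta^2\|\log x^* + \one_d\|_2^2$, the triangle inequality together with $\|\mW\bz^*\|_2 \geq \lambda_{\min}^+(W)\,\|\bz^*\|_2$ gives
\begin{align*}
\|\bq^*\|_2^2 \;\leq\; \frac{m\,\bigl[\,2\theta^2\|\log x^* + \one_d\|_2^2 + 2n\,\sigmamaxsqr + n(\lambda_{\min}^+(W))^2\,\bigr]}{(\lambda_{\min}^+(W))^2},
\end{align*}
which is precisely the bracket appearing under the square root in the theorem statement.

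With the block constants and the diameter in hand, I would invoke the ACRCD convergence bound of \cite{gasnikov2016nontriviality}: the expected dual suboptimality after $N$ iterations scales as $(\sqrt{L_\bz}+\sqrt{L_\bs})^2\|\bq^* - \bq^0\|_2^2/N^2$, giving a total count $N = O((\sqrt{L_\bz}+\sqrt{L_\bs})\,\|\bq^*\|_2/\sqrt{\eps})$ to reach expected accuracy $\eps$. Out of these $N$ iterations, in expectation $\eta N$ update the $\bz$-block (the only ones that call the gossip matrix $\mW$ and therefore constitute communication rounds), whereas $(1-\eta) N$ update the $\bs$-block (purely local). Substituting $\eta$, $1-\eta$ and $\|\bq^*\|_2$ produces the two displayed formulas. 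The expected guarantee is lifted to an $(1-\delta)$-confidence statement by a standard Markov-plus-restart amplification, which is the origin of the $\log(1/\delta)$ factor. Finally, the primal-dual nature of ACRCD ensures that an $\eps$-accurate dual solution directly yields an $\eps$-accurate primal one, so no extra conversion lemma is required.

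The main obstacle is twofold. First, keeping careful track of the $m$-dependence inside the square root: the block Lipschitz constants are easy to compute, but Lemma~\ref{lem: smoothness} uses a slightly loose chain of triangle inequalities, so a tighter block-wise variant (without the $\sum_i$ overcount) is what makes the complexity land on the $m^{1/4}$ scaling of the statement. Second, upgrading ACRCD's in-expectation guarantee to the high-probability form claimed in the theorem — the cleanest route is Markov's inequality followed by $O(\log(1/\delta))$ independent restarts from the best iterate so far, but one has to verify that the projected/composite variant of ACRCD used here (for $p=1$, $q=\infty$) admits this boosting step without losing the primal-dual recovery property.
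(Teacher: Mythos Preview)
Your overall strategy matches the paper's: block Lipschitz constants (Lemma~\ref{lem:smoothness_of_blocks}), a block-wise dual-radius bound (Lemma~\ref{lem: norm-block-solutions}), and then the ACRCD rate from \cite{gasnikov2016nontriviality} split by the sampling probability~$\eta$. However, your $m$-accounting has a concrete gap.

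The paper does \emph{not} obtain the $m^{1/4}$ from a tighter Lipschitz argument. Lemma~\ref{lem:smoothness_of_blocks} keeps the same loose $\sum_i$ chain as Lemma~\ref{lem: smoothness} and arrives at $L_\bz = \sqrt{m}\,\lambda_{\max}^2(W)/\theta$ and $L_\bs = \sqrt{m}\,\sigmamaxsqr/\theta$, so $\sqrt{L_\bz}+\sqrt{L_\bs}$ already contributes a factor $m^{1/4}$. What kills the remaining $m^{1/2}$ coming from $R_\bz^2 + R_\bs^2 \sim m[\ldots]$ is the primal-to-dual accuracy conversion: accuracy $\eps$ for the averaged primal~\eqref{eq:problem_initial} corresponds to accuracy $m\eps$ for the stacked problem~\eqref{eq:primal_problem_new_variables} and hence for the dual~\eqref{formula: original}. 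The paper's iteration count is therefore
\[
N \;=\; O\!\left((\sqrt{L_\bz}+\sqrt{L_\bs})\sqrt{\frac{R_\bz^2+R_\bs^2}{m\eps}}\,\log\frac{1}{\delta}\right),
\]
and the division by $m$ under the root is what leaves exactly $m^{1/4}$. You explicitly drop this step (``an $\eps$-accurate dual solution directly yields an $\eps$-accurate primal one''), so with your bookkeeping the count lands on $m^{1/2}$, not $m^{1/4}$, regardless of which Lipschitz bound you use.

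Two smaller points. First, your observation that a tighter block-wise argument gives $L_\bs = \sigmamaxsqr/\theta$ without the $\sqrt m$ is correct --- the $\sum_i\norm{\cdot}_2 \le \sqrt m\,\norm{\cdot}_2$ step in Lemma~\ref{lem:smoothness_of_blocks} is indeed loose --- but combined with the $m\eps$ target this would yield a \emph{better} dependence ($m^0$) than the theorem claims, not $m^{1/4}$; so it cannot be the mechanism behind the stated scaling. Second, the paper does not do a Markov-plus-restart amplification; it invokes Remarks~3 and~6 of \cite{gasnikov2016nontriviality} directly, which already deliver the $\log(1/\delta)$ factor for the projected block-coordinate variant, so your concern about verifying the boosting step is moot.
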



First, we need to estimate Lipschitz constants for gradients of each block of variables. If we consider the function $H$ as a function of two blocks of variables, the next result follows.
\begin{lemma}
\label{lem:smoothness_of_blocks}
    Function $H(\bz, \bs)$ has a $L_\bz$-Lipschitz gradient w.r.t. $\bz$ and $L_\bs$-Lipschitz gradient w.r.t. $\bs$, where
	\begin{align*}
		L_\bz &= \frac{\sqrt m\sigma_{\max}^2(W)}{\theta},~ L_\bs = \frac{\sqrt m\sigmamaxsqr}{\theta}.
	\end{align*}
\end{lemma}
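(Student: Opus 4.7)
The plan is to reduce the block-Lipschitz estimates to a single statement about the Lipschitz constant of $\nabla G^*$ and then plug in the linear factors coming from $\mW$ and $\mA$. First, I would compute the partial gradients by noting that $H(\bz, \bs) = G^*(-\mW\bz - \mA^\top\bs) + \angles{\bs, \bb}$, so by the chain rule
\begin{align*}
	\nabla_\bz H(\bz, \bs) = -\mW \nabla G^*(-\mW\bz - \mA^\top\bs), \qquad \nabla_\bs H(\bz, \bs) = \bb - \mA \nabla G^*(-\mW\bz - \mA^\top\bs).
\end{align*}
Freezing one block and varying the other, the increment is transmitted to $\nabla G^*$ through the linear map $\mW$ or $\mA^\top$ respectively, so it suffices to bound $\|\nabla G^*(\bt_2) - \nabla G^*(\bt_1)\|_2$ by a multiple of $\|\bt_2 - \bt_1\|_2$ and multiply by $\sigma_{\max}(\mW)$ or $\sigma_{\max}(\mA)$ on each side.

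The key estimate is the Lipschitz constant of $\nabla G^*$ itself. Since $G^*(\bt) = \sum_{i=1}^m g^*(\mE_i\bt)$ we have $\nabla G^*(\bt) = \sum_{i=1}^m \mE_i^\top \nabla g^*(\mE_i\bt)$. From the proof of Lemma~\ref{lem: smoothness} each $g^*$ has $\nabla g^*$ that is $(1/\theta)$-Lipschitz. I would therefore apply the triangle inequality followed by Cauchy--Schwarz:
\begin{align*}
	\|\nabla G^*(\bt_2) - \nabla G^*(\bt_1)\|_2 \le \sum_{i=1}^m \|\nabla g^*(\mE_i\bt_2) - \nabla g^*(\mE_i\bt_1)\|_2 \le \sqrt{m}\cbraces{\sum_{i=1}^m \|\nabla g^*(\mE_i\bt_2) - \nabla g^*(\mE_i\bt_1)\|_2^2}^{1/2}.
\end{align*}
Bounding each summand by $\tfrac{1}{\theta}\|\mE_i(\bt_2 - \bt_1)\|_2$ and using that the $\mE_i$ project onto disjoint coordinate blocks, the sum under the square root telescopes into $\|\bt_2 - \bt_1\|_2^2$, giving the clean bound $\|\nabla G^*(\bt_2) - \nabla G^*(\bt_1)\|_2 \le \tfrac{\sqrt{m}}{\theta}\|\bt_2 - \bt_1\|_2$.

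Finally, I would combine the two steps. With $\bs$ fixed, $\bt_2 - \bt_1 = -\mW(\bz_2 - \bz_1)$, so
\begin{align*}
	\|\nabla_\bz H(\bz_2, \bs) - \nabla_\bz H(\bz_1, \bs)\|_2 \le \sigma_{\max}(\mW)\cdot \frac{\sqrt{m}}{\theta}\cdot \sigma_{\max}(\mW)\|\bz_2 - \bz_1\|_2,
\end{align*}
and the same argument with $\bz$ fixed yields the bound with $\sigma_{\max}(\mA)$. Using $\sigma_{\max}(\mW) = \sigma_{\max}(W)$ (as $\mW = W\otimes \mI_d$) and $\sigma_{\max}(\mA) = \sigma_{\max}(\cA)$ (since $\mA$ is block-diagonal, so its singular values are exactly those of the $A_i$'s), we recover the stated $L_\bz$ and $L_\bs$. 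The only delicate step is avoiding the naive $m/\theta$ factor one gets from the plain triangle bound on the sum; the $\sqrt{m}$ comes from Cauchy--Schwarz combined with the orthogonality of the $\mE_i$'s, and this is really the only ingredient where care is required.
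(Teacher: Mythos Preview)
Your proposal is correct and follows essentially the same approach as the paper: both arguments use the chain rule, the $1/\theta$-Lipschitzness of $\nabla g^*$, the block structure of $\mA$ and $\mW$, and the triangle inequality followed by Cauchy--Schwarz to produce the $\sqrt m$ factor. The only difference is organizational --- you first isolate a global $\sqrt m/\theta$ Lipschitz bound for $\nabla G^*$ and then sandwich by $\sigma_{\max}(\mW)$ or $\sigma_{\max}(\mA)$, whereas the paper carries the block decomposition through each case separately; your packaging is a little cleaner but the ingredients are identical.
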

\begin{proof}
	Recall that we denoted $\bx = \col[x_1, \ldots, x_m]$ and consider $\bs_1, \bs_2\in\R^{nm}$. Also denote $[\bx]_i = E_i\bx = x_i$.
	\begin{align*}
		\|\nabla_\bs &H(\bz, \bs_2) - \nabla_\bs H(\bz, \bs_1)\|_2 \\
		&= \|\mA\nabla G^*(-\mW\bz - \mA^\top\bs_2) - \mA\nabla G^*(-\mW\bz - \mA^\top\bs_1)\|_2 \\
		&\leq \sum_{i=1}^m \norm{A_i\nabla g^*\cbraces{-[\mW\bz]_i - [\mA^\top\bs_2]_i} - A_i\nabla g^*\cbraces{-[\mW\bz]_i - [\mA^\top\bs_1]_i}}_2 \\
		&\overset{\circledOne}{=} \sum_{i=1}^m \norm{A_i\nabla g^*\cbraces{-[\mW\bz]_i - A_i^\top[\bs_2]_i} - A_i\nabla g^*\cbraces{-[\mW\bz]_i - A_i^\top[\bs_1]_i}}_2 \\
		&\leq \frac{\sigmamax}{\theta}\sum_{i=1}^m \norm{A_i^\top[\bs_2]_i - A_i^\top[\bs_1]_i}_2
		\leq \frac{\sigmamaxsqr}{\theta} \sum_{i=1}^m \norm{[\bs_2]_i - [\bs_1]_i}_2 \\
		&\overset{\circledTwo}{\leq} \frac{\sqrt{m}\sigmamaxsqr}{\theta}\norm{\bs_2 - \bs_1}_2,
	\end{align*}
	where $\circledOne$ holds due to the structure of $\mA = \diag[A_1, \ldots, A_m]$ and $\circledTwo$ holds by convexity of the $2$-norm.

	Now consider the gradient w.r.t. $\bz$. Let $[\bx]^{(i)} = [x_1^{(i)}\ldots x_m^{(i)}]^\top$ denote a vector consisting of $i$-th components of $x_1, \ldots, x_m$. We have $[\mW\bx]_i = W[\bx]^{(i)}$ due to the structure of $\mW = W\otimes \mI_d$.
	\begin{align*}
		\|\nabla &H_\bz^*(\bz_2, \bs) - \nabla H_\bz^*(\bz_1, \bs)\|_2 \\
		&= \|\mW\nabla G^*(-\mW\bz_2 - \mA^\top\bs) - \mW\nabla G^*(-\mW\bz_1 - \mA^\top\bs)\|_2 \\
		&\leq \sum_{i=1}^m \norm{W\nabla g^*\cbraces{-W[\bz_2]^{(i)} - [\mA^\top\bs]_i} - W\nabla g^*\cbraces{-W[\bz_1]^{(i)} - [\mA^\top\bs]_i}}_2 \\
		&\leq \frac{\lambda_{\max}(W)}{\theta}\sum_{i=1}^m \norm{W([\bz_2]^{(i)} - [\bz_1]^{(i)})}_2
		\leq \frac{\lambda_{\max}^2(W)}{\theta} \sum_{i=1}^m \norm{[\bz_2]^{(i)} - [\bz_1]^{(i)}}_2 \\
		&\overset{\circledOne}{\leq} \frac{\sqrt{m}\lambda_{\max}^2(W)}{\theta}\norm{\bz_2 - \bz_1}_2,
	\end{align*}
	where $\circledOne$ holds by convexity of the $2$-norm.
\end{proof}
We need to bound dual distance for each block of variables, but first we need to claim an useful proposition from functional analysis.
\begin{proposition}
    \label{prop: norm-equivalence}
    Let $p > r \geq 1, x \in \mathbb{R}^d$. It holds
    \begin{align*}
        \|x\|_p \leq \|x\|_r \leq d^{\cbraces{\frac{1}{r}- \frac{1}{p}}}\|x\|_p
    \end{align*}
\end{proposition}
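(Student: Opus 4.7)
The plan is to prove the two inequalities separately, as both are standard and follow from elementary $\ell^p$-norm manipulations.

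For the left inequality $\|x\|_p \le \|x\|_r$, I would first dispose of the trivial case $x = 0$ and otherwise normalize by considering $y = x/\|x\|_r$, so that $\|y\|_r = 1$ and hence $|y_i| \le 1$ for every coordinate $i$. Since $p > r$, this implies $|y_i|^p \le |y_i|^r$ coordinate-wise, and summing gives $\|y\|_p^p \le \|y\|_r^r = 1$, i.e. $\|y\|_p \le 1$. Rescaling back yields the desired bound.

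For the right inequality $\|x\|_r \le d^{1/r - 1/p}\|x\|_p$, the approach is Hölder's inequality applied to the decomposition $|x_i|^r = |x_i|^r \cdot 1$. I would use conjugate exponents $p/r$ and $p/(p-r)$ (which are valid since $p>r\ge 1$) to obtain
\begin{align*}
    \sum_{i=1}^d |x_i|^r \le \left(\sum_{i=1}^d |x_i|^p\right)^{r/p} \left(\sum_{i=1}^d 1\right)^{(p-r)/p} = \|x\|_p^r \cdot d^{(p-r)/p}.
\end{align*}
Taking the $r$-th root yields the stated factor $d^{1/r - 1/p}$.

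Neither inequality poses any real obstacle; the only care needed is bookkeeping of exponents in the Hölder step and handling of the edge cases ($x = 0$, or $p = \infty$ if one wanted to extend the statement). Since the proposition as stated assumes $p > r \ge 1$ with both finite, no further subtlety arises.
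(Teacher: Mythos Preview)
Your proposal is correct and matches the paper's approach: the paper's own proof is only a one-line remark that the result is a well-known consequence of H\"older's inequality, and you have simply spelled out that standard argument in full (with the usual normalization trick for the monotonicity direction). There is nothing to add.
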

\begin{proof}
    This is a fairly well-known fact with a simple idea of proof. In fact, it is a direct consequence of H\'older's inequality, what means that constant in an inequality are unimprovable.
\end{proof}
Now we derive the bound on the norm of the dual solution. The convergence result only relies on the case $p = 1~ (q = \infty)$, but we derive a bound for any $q\geq 1$.
\begin{lemma}  
\label{lem: norm-block-solutions}
	Let $\bz^*, \bs^*$ be the solutions of dual problem~\eqref{eq:dual_problem_with_regularizer} and let $x^*$ be a solution of~\eqref{eq:problem_initial}. It holds 
	\begin{align*}
		\norm{\bz^*}_2^2 &\leq R_\bz^2 = \frac{2\theta^2m\|\log x^* + \one_d\|_2^2 + 2\sigmamaxsqr \cdot \max{\cbraces{1, \cbraces{mn}^{\cbraces{1 - \frac{2}{q}}}}}} {(\lambda^+_{min}(W))^2}\\	
        \norm{\bs^*}_2^2 &\leq R_\bs^2 = \max{\cbraces{1, \cbraces{mn}^{\cbraces{1 - \frac{2}{q}}}}}
	\end{align*}
\end{lemma}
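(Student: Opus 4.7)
The plan is to handle the two bounds separately: the $\bs^*$ bound comes directly from the feasibility constraint of the constrained dual formulation \eqref{formula: original} together with norm equivalence, while the $\bz^*$ bound is obtained by extracting the $\bz$-component of the KKT relation used in Lemma~\ref{lem:solution norm} and then plugging in the bound on $\bs^*$.

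First I would bound $\norm{\bs^*}_2$. Since $(\bz^*,\bs^*)$ is a minimizer of the constrained dual \eqref{formula: original}, it satisfies $\norm{\bs^*}_q \leq 1$. Applying Proposition~\ref{prop: norm-equivalence} on $\R^{mn}$ (with dimension $mn$): when $q \geq 2$, one has $\norm{\bs^*}_2 \leq (mn)^{1/2 - 1/q}\norm{\bs^*}_q \leq (mn)^{1/2 - 1/q}$, giving $\norm{\bs^*}_2^2 \leq (mn)^{1 - 2/q}$; when $1 \leq q < 2$, one has $\norm{\bs^*}_2 \leq \norm{\bs^*}_q \leq 1$. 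The two cases combine to $\norm{\bs^*}_2^2 \leq \max\bigl(1,(mn)^{1-2/q}\bigr) = R_\bs^2$.

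Next I would bound $\norm{\bz^*}_2$ by reusing the saddle-point argument from the proof of Lemma~\ref{lem:solution norm}. Substituting $\by = \by^*$ in the Lagrangian inequality yields the first-order condition $-\mW\bz^* - \mA^\top\bs^* = \nabla G(\bx^*)$, so $\mW\bz^* = -\nabla G(\bx^*) - \mA^\top\bs^*$. Squaring and applying $\norm{a+b}_2^2 \leq 2\norm{a}_2^2 + 2\norm{b}_2^2$ gives
\begin{align*}
	\norm{\mW\bz^*}_2^2 \leq 2\norm{\nabla G(\bx^*)}_2^2 + 2\norm{\mA^\top\bs^*}_2^2.
\end{align*}
Since we may WLOG take $\bz^*$ orthogonal to $\kernel \mW$ (adding a kernel element does not change the dual objective), $\norm{\mW\bz^*}_2^2 \geq (\lambda_{\min}^+(\mW))^2 \norm{\bz^*}_2^2$. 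Using the Kronecker structure $\mW = W\otimes \mI_d$, one has $\lambda_{\min}^+(\mW) = \lambda_{\min}^+(W)$.

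Finally I would plug in the explicit estimates: from Lemma~\ref{lem:solution norm}, $\norm{\nabla G(\bx^*)}_2^2 = \theta^2 m\norm{\log x^* + \one_d}_2^2$, and $\norm{\mA^\top\bs^*}_2^2 \leq \sigmamaxsqr \norm{\bs^*}_2^2 \leq \sigmamaxsqr \cdot \max\bigl(1,(mn)^{1-2/q}\bigr)$ by the first bound. Combining these yields $R_\bz^2$. The only delicate points I anticipate are justifying the choice of $\bz^*$ in the image of $\mW$ (so that $\lambda_{\min}^+$ is the correct constant, not zero) and being careful that the KKT condition, originally derived in Lemma~\ref{lem:solution norm} for the regularized dual \eqref{eq:dual_problem_with_regularizer}, also applies to the constrained dual \eqref{formula: original} that actually governs $\bs^*$; this is straightforward since both have the same optimality conditions on the dual variables modulo the constraint on $\bs$.
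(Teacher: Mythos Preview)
Your proposal is correct and follows essentially the same route as the paper: bound $\norm{\bs^*}_2$ via the constraint $\norm{\bs^*}_q\le 1$ and norm equivalence, then use the KKT relation $-\mW\bz^*-\mA^\top\bs^*=\nabla G(\bx^*)$ together with $\norm{a+b}_2^2\le 2\norm a_2^2+2\norm b_2^2$ and $\lambda_{\min}^+(\mW)=\lambda_{\min}^+(W)$ to bound $\norm{\bz^*}_2$. You are in fact slightly more careful than the paper in explicitly noting that one should take $\bz^*$ orthogonal to $\kernel\mW$ so that $\lambda_{\min}^+$ is the relevant constant.
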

\begin{proof}
    Using that the problems \ref{formula: original} and \ref{formula: regularization} are equal, that means
    \begin{align}
    \label{ineq: q-norm}
        \|\bs^*\|_q \leq 1
    \end{align}
    Using Proposition \ref{prop: norm-equivalence} we have 
    \begin{align}
    \label{ineq: dependence-on-q}
        \|\bs^*\|_2^2 \leq
        \begin{cases}
        \|\bs^*\|_q^2, & q < 2\\
        \cbraces{mn}^{\cbraces{1 - \frac{2}{q}}}\|\bs^*\|_q^2, & q \geq 2
        \end{cases}
    \end{align}
    Combininq \ref{ineq: q-norm} and \ref{ineq: dependence-on-q}, we have 
    \begin{align}
    \label{ineq: final-bound-s}
        \|\bs^*\|_2^2 \leq
        \begin{cases}
        1, & q < 2\\
        \cbraces{mn}^{\cbraces{1 - \frac{2}{q}}}, & q \geq 2
        \end{cases}
    \end{align}
    With the fact that $\cbraces{mn}^{\cbraces{1 - \frac{2}{q}}} < 1$ where $q < 2$ we state the claimed result.\\
    Using the fact from proof of Lemma \ref{lem:solution norm} such that
    \begin{align*}
       &-\mW\bz^* - \mA^\top\bs^* = \nabla G(\bx^*)
    \end{align*}
    we have
    \begin{align*}
        \|\mW\bz^*\|_2^2 \leq 2\|\nabla G(\bx^*)\|_2^2 + 2\|\mA^\top\bs^*\|_2^2 \leq 2\theta^2m\|\log x^* + \one_d\|_2^2 + 2\sigmamaxsqr \cdot \norm{\bs^*}^2_2
    \end{align*}
    As a result
    \begin{align}
    \label{ineq: bound-with-s}
        \|\bz^*\|_2^2 &\leq \frac{2\theta^2m\|\log x^* + \one_d\|_2^2 + 2\sigmamaxsqr \cdot \norm{\bs^*}^2_2}{(\lambda^+_{min}(W))^2}
    \end{align} 
    Using \ref{ineq: final-bound-s} into \ref{ineq: bound-with-s}, we claim the final result.
\end{proof}

\begin{proof}[Proof of Theorem~\ref{th:acrcd_convergence}]
The proof is based on results in \cite{gasnikov2016nontriviality}. We have two blocks of variables: $\bz$ and $\bs$. Firstly, Remark 3 of \cite{gasnikov2016nontriviality} shows that a block coordinate method is applicable to constrained problems, provided that the constraint set is separable over variable blocks. Secondly, we apply Remark 6 of the same paper with coefficient $\beta = 1/2$. At each step, we randomly choose one of two variable blocks, and factor $\beta$ rules the probability distribution. In Algorithm \ref{alg:ACRCD}, the probability of choosing block $\bz$ is $\eta = \sqrt{L_\bz}/(\sqrt{L_\bz} + \sqrt{L_\bs})$, and block $\bs$ is chosen with probability $(1 - \eta)$. Recall that for accuracy $\eps$ in primal problem \eqref{eq:problem_initial} we need accuracy $m\eps$ in dual problem \eqref{formula: original}. Combining the two remarks, we obtain that a resulting method makes $N$ iterations to reach $\eps$ accuracy with probability at least $1 - \delta$, where
\begin{align*}
	N = O\cbraces{\cbraces{\sqrt L_\bz + \sqrt L_\bs}\sqrt{\frac{R_\bz^2 + R_\bs^2}{m\eps}}\log\cbraces{\frac{1}{\delta}}}.
\end{align*}
Consequently, the expected number of computations of $\nabla H_\bz$ (that equals the number of communications) is $\eta N$, and the expected number of computations of $\nabla H_\bs$ (that corresponds to the number of local computations) is $(1 - \eta) N$. Substituting the expressions for $N$ and $\eta$, we obtain the desired result.
\end{proof}


\section{Conclusion}

In this paper, we considered a particular class of non-smooth decentralized problems. Due to specific problem structure we obtained methods that have a better dependency on problem complexity than general lower bounds. Our approach is based on passing to the dual problem. Moreover, we proposed two accelerated algorithms. The first algorithm is an accelerated primal-dual gradient method that is directly applied to the problem. The second method is a block-coordinate algorithm that allows to split communication and computation complexities.

\bibliographystyle{abbrv}
\bibliography{references}

\end{document}